\documentclass{amsart}
\usepackage{amsmath}
\usepackage{amsthm}
\usepackage{amsfonts}
\usepackage{amssymb}
\usepackage{amsbsy}
\usepackage{amscd}
\usepackage{eucal}
\usepackage{amsaddr}



\newtheorem{theorem}{Theorem}


\numberwithin{equation}{section}


\newcommand{\beq}{\begin{equation}}
\newcommand{\eeq}{\end{equation}}


\newcommand{\rmd}{\mathrm{d}}
\newcommand{\rmi}{\mathrm{i}}


\newcommand{\R}{\mathbb{R}}


\newcommand{\hf}{\widehat{f}}


\newcommand{\sds}{\strut\displaystyle}
\newcommand{\ud}{\frac{1}{2}}


\newcommand{\skd}{\vspace*{0.2cm}}
\newcommand{\skt}{\vspace*{0.3cm}}


\begin{document}

\title[Legendre coefficients]{A new and efficient method for the computation of Legendre coefficients}

\author[E. De Micheli]{Enrico De Micheli$^*$}
\address{\sl Consiglio Nazionale delle Ricerche \\ Via De Marini, 6 - 16149 Genova, Italy \\
E-mail: enrico.demicheli@cnr.it}
\thanks{{$^*$}Corresponding author.}
\author[G. A. Viano]{Giovanni Alberto Viano}
\address{\sl Dipartimento di Fisica -- Universit\`a di Genova,\\
Istituto Nazionale di Fisica Nucleare -- Sezione di Genova, \\
Via Dodecaneso, 33 - 16146 Genova, Italy \\
E--mail: viano@ge.infn.it}

\subjclass[2010]{42C10, 65T50}
\keywords{Legendre coefficients, Fourier coefficients, Abel transform}

\begin{abstract}
An efficient procedure for the computation of the coefficients of Legendre expansions
is here presented. We prove that the Legendre coefficients associated with a function $f(x)$
can be represented as the Fourier coefficients of an Abel--type transform of $f(x)$. 
The computation of $N$ Legendre coefficients can then be performed in 
$\mathcal{O}(N\log N)$ operations with a single Fast Fourier Transform of the 
Abel--type transform of $f(x)$.
\end{abstract}

\maketitle

\section{Introduction}
\label{se:introduction}

The efficient computation of the coefficients of Legendre expansions
is a very important problem in numerical analysis and applied mathematics
with a wide range of applications including, just to mention a few,
approximation theory, solution of partial differential equations and quadratures. 
Recently its relevance emerged also in connection with the computation of
spectra of highly oscillatory Fredholm integral operators, which play an important role
in laser engineering \cite{Brunner}. 

The difficulty of the problem lies essentially
in the fact that these coefficients are represented by integrals whose integrands
oscillate rapidly for large values of the index of the polynomials. Using standard quadrature
procedures for the calculation of $N$ Legendre coefficients leads only to \emph{slow} $\mathcal{O}(N^2)$ algorithms
(see, e.g., Ref. \cite{Delic}).
More efficiently, in Ref. \cite{Alpert} (see also \cite{Piessens,Potts}) 
the Legendre coefficients are obtained by a suitable transformation
of the corresponding Chebyshev coefficients, which yields \emph{faster} $\mathcal{O}(N(\log N)^2)$ algorithms.
Recently this problem has been also clearly discussed in a paper by A. Iserles \cite{Iserles1}, in which
an algorithm for the computation of the Legendre coefficients, which is certainly fast and brilliant, is presented. 

In this paper we present an alternative procedure.
The basic idea of our method consists in exploiting the Dirichlet--Murphy
integral representation of the Legendre polynomials. Next, we prove that
the coefficients of the Legendre expansion of a function $f(x)$
are connected with a subset of the Fourier coefficients (the ones with nonnegative index) 
of an Abel--type transform of $f(x)$.

The numerical implementation of the algorithm follows straightforwardly and is very efficient.
The aforementioned Fourier coefficients (which represent the searched Legendre coefficients) can be 
computed in $\mathcal{O}(N\,\log N)$ operations by a single Fast Fourier Transform 
after the evaluation of the Abel--type integral by means of standard quadrature techniques.

\section{Connection of Legendre expansions to Fourier series}
\label{se:interpretation}

The standard form of the Legendre expansion reads:
\beq
f(x) = \sum_{n=0}^\infty c_n \, P_n(x) \qquad x\in[-1,1],
\label{1}
\eeq
where $P_n(x)$ are the Legendre polynomials,
which can be defined by the generating function \cite{Bateman}:
\beq
\sum_{n=0}^\infty P_n(x)\,t^n = \left(1-2xt+t^2\right)^{-\ud},
\label{5bis}
\eeq
and the coefficients $\{c_n\}_{n=0}^\infty$ are given by:
\beq
c_n = \left(n+\ud\right)\int_{-1}^1 f(x)\,P_n(x)\,\rmd x
\qquad (n\geqslant 0).
\label{2}
\eeq 
The conditions to be satisfied by $f(x)$ to guarantee the uniform convergence of the series in 
\eqref{1} are discussed in \cite{Hobson}. However, for our purpose of computing
the Legendre coefficients $c_n$ it is sufficient to assume that $f(x)$ be summable
in the interval $[-1,1]$.

We can now state the following theorem.

\begin{theorem}
\label{the:1}
The coefficients $\{a_n\}_{n=0}^\infty$, defined as $a_n\doteq\frac{c_n}{(2n+1)}$, 
coincide with the Fourier coefficients (with $n\geqslant 0$) of an
Abel--type transform of $f(x)$, that is:
\beq
a_n \doteq \frac{c_n}{(2n+1)} = \int_{-\pi}^\pi\hf(y) \, e^{\rmi ny}\,\rmd y \qquad (n\geqslant 0),
\label{6}
\eeq
where the $2\pi$-periodic function $\hf(y)$ is defined by
\beq
\hf(y) = \frac{1}{2\pi\rmi}\,\varepsilon(y)\,e^{\rmi\frac{y}{2}}\!
\int_{\cos y}^1 \,\frac{f(x)}{[2(x-\cos y)]^\ud}\,\rmd x \qquad (y\in\R),
\label{7}
\eeq
$\varepsilon(y)$ being the sign function.
\end{theorem}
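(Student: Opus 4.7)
The plan is to work directly with the definitions in \eqref{6} and \eqref{7}, unfold the sign function and the exponential factor to convert the Fourier integral on $[-\pi,\pi]$ into a sine--kernel integral on $[0,\pi]$, then exchange the order of integration with the inner Abel--type integral, and finally identify the resulting inner integral with a classical Dirichlet--Murphy integral representation of $P_n(x)$.

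First, I would note that the Abel integral in \eqref{7} depends on $y$ only through $\cos y$, so it is an even function of $y$. Combined with the odd factor $\varepsilon(y)$ and the exponential $e^{\rmi y/2}$, splitting the integral in \eqref{6} over $[-\pi,0]$ and $[0,\pi]$ and using $\varepsilon(-y)=-\varepsilon(y)$ should collapse the two halves into
\beq
a_n = \frac{1}{\pi}\int_0^\pi \sin\!\bigl((n+\tfrac{1}{2})y\bigr)\,
 \int_{\cos y}^1 \frac{f(x)}{[2(x-\cos y)]^{1/2}}\,\rmd x\,\rmd y,
\label{step1}
\eeq
so the phase shift $e^{\rmi y/2}$ and the factor $(2\pi\rmi)^{-1}$ conspire to produce exactly the half--integer sine kernel.

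Next I would invoke Fubini to interchange the order of integration in \eqref{step1}. The region $\{(x,y):0\leqslant y\leqslant\pi,\ \cos y\leqslant x\leqslant 1\}$ rewrites as $\{(x,y):-1\leqslant x\leqslant 1,\ \arccos x\leqslant y\leqslant\pi\}$. Since $f$ is summable and the Abel kernel $[2(x-\cos y)]^{-1/2}$ is locally integrable in $y$ near $y=\arccos x$, the interchange is legitimate and yields
\beq
a_n = \frac{1}{\pi}\int_{-1}^1 f(x)
 \int_{\arccos x}^\pi \frac{\sin\!\bigl((n+\tfrac{1}{2})y\bigr)}{[2(x-\cos y)]^{1/2}}\,\rmd y\,\rmd x.
\label{step2}
\eeq
The decisive step is then to recognize the inner $y$--integral, via the substitution $x=\cos\theta$, as one of the Dirichlet--Murphy (Dirichlet--Mehler) representations of the Legendre polynomials,
\beq
P_n(\cos\theta)=\frac{2}{\pi}\int_\theta^\pi
 \frac{\sin\!\bigl((n+\tfrac{1}{2})y\bigr)}{[2(\cos\theta-\cos y)]^{1/2}}\,\rmd y,
\label{DM}
\eeq
so the inner integral in \eqref{step2} equals $\frac{\pi}{2}P_n(x)$. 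Substituting back gives $a_n=\frac{1}{2}\int_{-1}^1 f(x)P_n(x)\,\rmd x=c_n/(2n+1)$, as required.

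The main obstacle is really the choice of the correct form of the Dirichlet--Murphy identity \eqref{DM}: there are two classical variants (with a cosine kernel on $[0,\theta]$ and with a sine kernel on $[\theta,\pi]$), and only the sine variant matches the upper limit $1$ in the Abel integral of \eqref{7}. Once this identity is taken as known, the rest of the argument is the bookkeeping of exchanging integrals and exploiting the parity of the Abel kernel in $y$; the summability of $f$ is enough to make Fubini apply, since the Abel singularity is integrable.
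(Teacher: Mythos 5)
Your argument is correct, and it is essentially the paper's proof run in the reverse direction with the real form of the key identity. The paper starts from the definition \eqref{2} of $c_n$, inserts the complex Dirichlet--Murphy representation \eqref{8} over the contour $[x,2\pi-x]$, interchanges the integrals, and then needs two changes of variables ($y\to y-2\pi$, $x\to -x$) to fold the piece on $[\pi,2\pi]$ back onto $[-\pi,0]$ and recognize the factor $\varepsilon(y)e^{\rmi y/2}$ in \eqref{7}. You instead start from the claimed Fourier integral, use the parity of the Abel kernel in $y$ together with $\varepsilon(-y)=-\varepsilon(y)$ to collapse it at once into the sine--kernel form (which is precisely the paper's formula \eqref{15} divided by $2n+1$), apply Fubini, and invoke the real Dirichlet--Mehler identity $P_n(\cos\theta)=\frac{2}{\pi}\int_\theta^\pi \sin((n+\frac12)y)[2(\cos\theta-\cos y)]^{-1/2}\,\rmd y$, which is exactly what \eqref{8} becomes after symmetrizing about $y=\pi$ and discarding the vanishing cosine part. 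The two routes are logically equivalent; yours buys a shorter computation (no contour folding) at the price of having to select the correct variant of the Dirichlet--Mehler pair, as you note, while the paper's forward derivation has the advantage of \emph{discovering} the form of $\hf(y)$ rather than verifying it. One small caveat applies equally to both proofs: for merely summable $f$ the inner integral $\int_{\arccos x}^{\pi}[2(x-\cos y)]^{-1/2}\,\rmd y$ grows logarithmically as $x\to 1$, so the Fubini step really requires a mild additional integrability condition near the endpoint; the paper glosses over this too, so it is not a defect specific to your write-up.
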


\begin{proof} 
Plugging the Dirichlet--Murphy integral representation of the Legendre 
polynomials \cite[Ch. III, \S 5.4]{Vilenkin}:
\beq
P_n(\cos x)=-\frac{\rmi}{\pi}\int_x^{(2\pi-x)}
\frac{e^{\,\rmi(n+\ud)y}}{\left[2(\cos x - \cos y)\right]^\ud}\,\rmd y,
\label{8}
\eeq
into equality \eqref{2} (after the change of variable $x \to \cos x$),
we have:
\beq
2\pi\rmi \, a_n=
\int_0^\pi \rmd x\,f(\cos x)\sin x\,
\int_x^{(2\pi-x)}\frac{e^{\,\rmi(n+\ud)y}}
{\left[2(\cos x - \cos y)\right]^{\ud}}\,\rmd y.
\label{9}
\eeq
Interchanging the order of integration in \eqref{9} we have:
\beq
\begin{split}
& 2\pi\rmi\, a_n=\int_0^\pi \rmd y \ e^{\,\rmi(n+\ud)y}
\int_0^y f(\cos x)\,\frac{\sin x}{\left[2(\cos x - \cos y)\right]^{\ud}}\,\rmd x \\
&\quad +\int_\pi^{2\pi} \rmd y \ e^{\,\rmi(n+\ud)y} 
\int_0^{(2\pi-y)} f(\cos x) \ \frac{\sin x}{\left[2(\cos x - \cos y)\right]^\ud}\, \rmd x.
\end{split}
\label{10}
\eeq
Next, if we make the change of variables: $y\to y-2\pi$ and $x \to -x$,
the second integral on the r.h.s. of \eqref{10} becomes: 
\beq
e^{\rmi\pi}\int_{-\pi}^0 \rmd y \ e^{\,\rmi(n+\ud)y}
\int_0^y f(\cos x) \ 
\frac{\sin x}{\left[2(\cos x - \cos y)\right]^\ud}\, \rmd x.
\label{11}
\eeq
Finally, we obtain:
\beq
\begin{split}
& 2\pi\rmi\, a_n=
\int_0^\pi \rmd y \ e^{\,\rmi(n+\ud)y}
\int_0^y f(\cos x) \ \frac{\sin x}{\left[2(\cos x - \cos y)\right]^\ud}\, \rmd x \\
& \quad +e^{\rmi\pi}\int_{-\pi}^0 \rmd y \ e^{\,\rmi(n+\ud)y}
\int_0^y f(\cos x) \ \frac{\sin x}{\left[2(\cos x - \cos y)\right]^\ud}\, \rmd x,
\end{split}
\label{12}
\eeq
which, after the change of variable $\cos x \to x$ into the integrals on the r.h.s., yields:
\beq
a_n=\int_{-\pi}^\pi \hf(y) \,e^{\rmi ny}\,\rmd y \qquad (n\geqslant 0),
\label{13}
\eeq
with $\hf(y)$ given by \eqref{7}.
\end{proof}

\skd

It is easy to check from \eqref{7} that $\hf(y)$ satisfies the following symmetry relation:
\beq
\hf(y)=-e^{\rmi y}\, \hf(-y).
\label{14}
\eeq
This latter, along with formulae \eqref{6} and \eqref{7}, allows us to write the Legendre
coefficients $c_n$ in the following form:
\beq
c_n = \frac{2}{\pi}\left(n+\ud\right)\int_0^\pi \phi(y)\sin\left[\left(n+\ud\right)y\right]\,\rmd y,
\label{15}
\eeq
where
\beq
\phi(y) = \int_{\cos y}^1 \frac{f(x)}{[2(x-\cos y)]^\ud}\,\rmd x.
\label{16}
\eeq

\skt

The numerical implementation of the algorithm first requires the computation of the
Abel--type integral $\hf(y)$ defined in \eqref{7} (or, equivalently, of the function $\phi(y)$ in \eqref{16}). 
The integrand presents a weak algebraic singularity at the end point of the domain of integration, which
can be effectively handled by means of a proper nonlinear change of variable. This technique, along
with the use of a standard quadrature formula (e.g., the Gauss-Legendre one), 
allows obtaining high accuracy with a small number of nodes \cite{Monegato}.

Finally, formula \eqref{6} makes it possible to take full advantage of the computational efficiency of
the Fast Fourier Transform both in terms of speed of computation and of accuracy \cite{Calvetti,Kaneko}. 
The calculation of the first $N$ coefficients of the expansion can consequently be accomplished 
in $\mathcal{O}(N\log N)$ operations.

The algorithm described has been implemented in double precision arithmetics using
the open source GNU Scientific Library (GSL) \cite{Galassi}, and
its performance has been tested on a variety of functions.

First, feasibility and accuracy of the algorithm have been verified by direct comparison 
of the obtained numerical results with the true Legendre coefficients for the function 
$f(x)=|x|^{3/2}$, whose Legendre coefficients are known to be \cite[p. 78]{Canuto}:
\beq
c_n =
\begin{cases}
0 &      \quad\textrm{if $n$ odd}, \\[+5pt]
(\alpha+1)^{-1} & \quad\textrm{if $n=0$}, \\[+5pt]
\frac{\sds(2n+1)\,\alpha\,(\alpha-2)\cdots(\alpha-n+2)}{\sds(\alpha+1)(\alpha+3)\cdots(\alpha+n+1)} 
& \quad\textrm{otherwise,}
\end{cases}
\eeq
where $\alpha=3/2$. Values of the computed Legendre coefficients along with the absolute error are given
in Table \ref{tab:1}.

\begin{table}[tb]
\caption{\label{tab:1} \it True and computed Legendre coefficients $c_n$ for the function $f(x)=|x|^{3/2}$.}
\centering
\begin{tabular}{|c|r|r|c|}
\hline
\rule{0pt}{4ex} $\boldsymbol{n\,}$ & \bf True $\boldsymbol{c_n}$ \hspace{1.3cm} 
& \bf Computed $\boldsymbol{c_n}$ \hspace{0.6cm} & \bf Error \\[+5pt]
\hline\rule{0pt}{3ex}
0  & 0.40000000000000000000 & 0.40000000000268187694 & 2.68$_{-12}$ \\[+5pt]
2  & 0.66666666666666666666 & 0.66666666671084839901 & 4.42$_{-11}$ \\[+5pt]
4  &-0.09230769230769230769 &-0.09230769246022409169 & 1.53$_{-10}$ \\[+5pt] 
6  & 0.03921568627450980392 & 0.03921568630769028951 & 3.32$_{-11}$ \\[+5pt] 
8  &-0.02197802197802197802 &-0.02197802226184656509 & 2.84$_{-10}$ \\[+5pt]
10 & 0.01411764705882352941 & 0.01411764762633771833 & 5.68$_{-10}$ \\[+5pt]
12 &-0.00985221674876847290 &-0.00985221698441776129 & 2.36$_{-10}$ \\[+5pt]
14 & 0.00727272727272727272 & 0.00727272772068757959 & 4.48$_{-10}$ \\[+5pt]
16 &-0.00559179869524697110 &-0.00559179938291245667 & 6.88$_{-10}$ \\[+5pt]
18 & 0.00443458980044345898 & 0.00443458980116155730 & 7.18$_{-13}$ \\[+5pt]
20 &-0.00360360360360360360 &-0.00360360382001508440 & 2.16$_{-10}$ \\[+5pt]
22 & 0.00298656047784967645 & 0.00298656131882347585 & 8.41$_{-10}$ \\[+5pt]
24 &-0.00251572327044025157 &-0.00251572347582495201 & 2.05$_{-10}$ \\[+5pt]
26 & 0.00214822771213748657 & 0.00214822836757538881 & 6.55$_{-10}$ \\[+5pt]
28 &-0.00185586142901330034 &-0.00185586305079012375 & 1.62$_{-09}$ \\[+5pt]
30 & 0.00161943319838056680 & 0.00161943394840411781 & 7.50$_{-10}$ \\[+5pt]
\hline
\end{tabular} 
\end{table}

The increment of performances, with respect to the computation of the Legendre coefficients $c_n$ 
by ordinary quadrature, has been verified in terms of speed of computation at (nearly) equality of precision.
All accuracies have been determined by comparing the results of the algorithm with the reference values 
of $c_n$, computed with $20$ significant figures by standard quadrature with Mathematica \cite{Mathematica}.
For these tests we used various functions (many of them have been already used in previous works), 
including polynomials, exponential/hyperbolic functions, 
rational functions (e.g., $f(x)=\frac{1+x}{\gamma^2+x^2}$ with $\gamma=$ constant).
All the results have confirmed the enormous increase of computational speed 
(the expected improvement ratio being proportional to $N/\log N$).
Such an increase of performances will become even more crucial for the efficient evaluation of multivariate Legendre
transform \cite{Brunner} and expansions in Gegenbauer (alias Ultraspherical Legendre) polynomials, which will
be the subject of a forthcoming paper.

\end{document}